\newcommand{\Aut}{\operatorname{Aut}}
\newcommand{\Inn}{\operatorname{Inn}}
\newcommand{\Sym}{\operatorname{Sym}}
\renewcommand{\leq}{\leqslant}
\renewcommand{\geq}{\geqslant}
\newcommand{\leqn}{\trianglelefteqslant}
\renewcommand{\:}{\colon}
\theoremstyle{plain}
\newtheorem{theorem}{Theorem}[section]
\newtheorem{lemma}[theorem]{Lemma}
\newtheorem{proposition}[theorem]{Proposition}
\newtheorem{thm}{Theorem}
\newtheorem{con}[thm]{Conjecture}
\begin{document}

	\title[Kronecker classes, normal coverings and chief factors of groups]{Kronecker classes, normal coverings \\ and chief factors of groups}
	
	\author{Marco Fusari}
	\address{M. Fusari, Dipartimento di Matematica ``Felice Casorati'', University of Pavia, 27100 Pavia, Italy}
	\email{lucamarcofusari@gmail.com}
	
	\author{Scott Harper}
	\address{S. Harper, School of Mathematics and Statistics, University of St Andrews, St Andrews, KY16 9SS, UK}
	\email{scott.harper@st-andrews.ac.uk}
	
	\author{Pablo Spiga}
	\address{P. Spiga, Dipartimento di Matematica e Applicazioni, University of Milano-Bicocca, 20126 Milano, Italy}
	\email{pablo.spiga@unimib.it}
	
	\begin{abstract}
		For a group $G$, a subgroup $U \leq G$ and a group $\mathrm{Inn}(G) \leq A \leq \mathrm{Aut}(G)$, we say that $U$ is an $A$-covering group of $G$ if $G = \bigcup_{a\in A}U^a$. A theorem of Jordan (1872) implies that if $G$ is a finite group, $A = \mathrm{Inn}(G)$ and $U$ is an $A$-covering group of $G$, then $U = G$. Motivated by a question concerning Kronecker classes of field extensions, Neumann and Praeger (1988) conjectured that, more generally, there is an integer function $f$ such that if $G$ is a finite group and $U$ is an $A$-covering subgroup of $G$, then $|G:U| \leq f(|A:\mathrm{Inn}(G)|)$. A key piece of evidence for this conjecture is a theorem of Praeger (1994), which asserts that there is a two-variable integer function $g$ such that if $G$ is a finite group and $U$ is an $A$-covering subgroup of $G$, then $|G:U|\leq g(|A:\mathrm{Inn}(G)|,c)$ where $c$ is the number of $A$-chief factors of~$G$. Unfortunately, the proof of this result contains an error. In this paper, using a different argument, we give a correct proof of this theorem.
	\end{abstract}
	
	\dedicatory{Dedicated to Cheryl Praeger: \\ for her invaluable contributions, which continue to inspire, entertain and challenge us.}  
	
	\keywords{coverings, finite groups, Kronecker classes}
	
	\subjclass[2020]{Primary 20D45; Secondary 12F10}
	
	\maketitle

	\section{Introduction}\label{s:intro}

	There are some remarkable connections between normal coverings of groups and algebraic number fields. In particular, various open conjectures regarding Kronecker classes of field extensions can be reformulated as questions about covering groups by conjugates of subgroups. The interplay between these two subjects is detailed in \cite{Klingen98} and summarised in \cite[Section~1.7]{BSW24}; we also refer the reader to the papers \cite{Jehne77,Klingen78,Praeger88,Praeger94}. For applications to Kronecker classes of field extensions, the most important open problem is the following conjecture of Neumann and Praeger \cite[Conjecture~4.3]{Praeger94} (see also the Kourovka Notebook \cite[11.71]{Kourovka}).
	
	\begin{con}[Neumann \& Praeger, 1990] \label{con:neumann-praeger}
		There exists a function $f\: \mathbb{N} \to \mathbb{N}$ such that the following holds. Let $G$ be a finite group, let $U \leq G$ and let $\Inn(G) \leq A \leq \Aut(G)$ with $|A:\Inn(G)| = n$. If $G = \bigcup_{a \in A} U^a$, then $|G:U| \leq f(n)$.
	\end{con}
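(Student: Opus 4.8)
The plan is to analyse $U$ along an $A$-chief series of $G$ and to show that, although every chief factor could in principle enlarge the index $|G:U|$, only boundedly many actually do so and each such contribution is itself bounded in terms of $n$. First I make the standard reduction: the subgroup $L := \bigcap_{a \in A} U^a$ is normalised by $A$, and since $\Inn(G) \leq A$ it is a normal subgroup of $G$; so replacing $(G,U,A)$ by $(G/L,\ U/L,\ \bar A)$---where $\bar A \leq \Aut(G/L)$ is the image of $A$ and contains $\Inn(G/L)$ with index dividing $n$---leaves $|G:U|$ unchanged while ensuring that $U$ contains no nontrivial $A$-invariant subgroup of $G$. Now fix an $A$-chief series $1 = G_0 < G_1 < \cdots < G_m = G$ and put $V_i := G_i/G_{i-1}$. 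Since each $G_i$ is normal in $G$ the subgroups $UG_i$ form a chain from $U$ to $G$, so
\[
|G:U| = \prod_{i=1}^{m} |UG_i : UG_{i-1}| = \prod_{i=1}^{m} |G_i : G_i \cap UG_{i-1}|,
\]
and the $i$-th factor $d_i := |UG_i : UG_{i-1}|$ divides $|V_i|$. Call $V_i$ \emph{active} when $d_i > 1$. The conjecture is thereby reduced to two tasks, each to be solved with a bound depending only on $n$: bound each contribution $d_i$ of an active factor, and bound the number of active factors.

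For the first, \emph{local}, task I would argue inside a single chief factor. Passing to $G/G_{i-1}$, the image of $U$ is again an $A$-covering subgroup, and intersecting the resulting covering of $G/G_{i-1}$ with $V_i = G_i/G_{i-1}$ shows, since $V_i$ is $A$-invariant, that $V_i$ is the union of the $A$-translates of $W_i := (G_i \cap UG_{i-1})/G_{i-1}$, a subgroup of index $d_i$ in $V_i$. A Jordan-type count then forces the number of these translates to be at least of order $d_i$, while the number of $A$-translates exceeds the number of $\Inn(G)$-translates by a factor of at most $n$; combined with the restrictions that Praeger's local analysis imposes on an active factor, in particular on the action of $G$ on $V_i$, this should yield $d_i \leq k(n)$ for an explicit function $k$. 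Granting also a bound $h(n)$ on the number of active factors, we would obtain $|G:U| = \prod_{i\ \mathrm{active}} d_i \leq k(n)^{h(n)} =: f(n)$.

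The second task---bounding the number of active factors---is the heart of the matter and is exactly where the conjecture goes beyond Praeger's theorem, so I expect it to be the main obstacle. The strategy I would pursue is to show that distinct active factors cannot be covered ``independently'': the parts of $A$ responsible for covering different active layers all act on the generalised Fitting subgroup $F^*(G)$ and permute its components, and an element of $A \setminus \Inn(G)$ can be essential to covering only boundedly many layers before its action on $F^*(G)$, and hence on $G$, is forced to agree with an inner one. Making this precise---attaching to each active layer a ``witness'' in $A/\Inn(G)$ and proving that the witnesses are, up to bounded multiplicity, distinct, so that the number of active factors is at most some explicit $h(n)$---requires handling simultaneously non-abelian chief factors, non-split (Frattini-type) extensions between layers, and the way in which inner automorphisms of $G$ can imitate the effect of outer ones on an individual factor. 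Controlling these correlations is precisely where Praeger's original argument broke down, and doing so is what a proof of the full conjecture demands.
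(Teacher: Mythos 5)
This statement is Conjecture~\ref{con:neumann-praeger}, which the paper explicitly states remains open; the paper proves only the weaker Theorem~\ref{thm:main}, in which the bound is allowed to depend on the $A$-chief length $c$ as well as on $n$. Your proposal is a strategy outline rather than a proof, and it has two genuine gaps. The decisive one is the second task, which you yourself concede is unresolved: bounding the number of ``active'' chief factors purely in terms of $n$ is precisely the content of the conjecture beyond Theorem~\ref{thm:main}, and the ``witness in $A/\Inn(G)$'' mechanism you sketch is not substantiated --- a single coset of $\Inn(G)$ in $A$ can be essential to covering many layers simultaneously (for instance when several abelian layers are isomorphic as $A$-modules), so there is no obvious injection, even up to bounded multiplicity, from active layers to outer cosets. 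Since this step is missing, the argument does not establish the statement.

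The first, ``local,'' task is also not actually carried out. Your covering $V_i=\bigcup_a W_i^a$ is correct, but the Jordan-type count only shows that the number $N$ of distinct translates satisfies $N\geq d_i$; this bounds $d_i$ from above only if $N$ is bounded in terms of $n$, and $N$ is $n$ times the number of $\Inn(G)$-translates, which is controlled only when the preimage of $W_i$ is close to normal in $G$. That holds at the top active layer (where $UG_i=G$), which is exactly the situation treated by Proposition~\ref{prop:supplement} of the paper, but not at lower layers: after peeling off the top layers, the group acting on $G_i$ is no longer $A$ with $|A:\Inn|=n$ but the restriction $A|_{K}$ with $|A|_K:\Inn(K)|$ bounded only by something like $(n\cdot|G:UG_i|)!$ --- this compounding is visible in condition~(\ref{eq2}) of the paper's recursion and is the reason the paper's bound degrades with each layer. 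So even granting your framework, the per-layer contribution is not a uniform $k(n)$, and the product $\prod_i d_i$ cannot be bounded by $k(n)^{h(n)}$ without first solving the compounding problem. In short: your reduction and product decomposition match the spirit of the paper's inductive proof of Theorem~\ref{thm:main}, but neither of the two bounds you need is established, and the second is exactly the open problem.
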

	
	This conjecture remains open. Establishing the veracity of this conjecture has important applications not only in number theory, but also in investigating Erd\H{o}s--Ko--Rado-type of theorems in algebraic combinatorics (see for instance~\cite[Section~1.4 and~1.6]{BSW24} or~\cite[Section~1.1]{FPS24}). This conjecture can also be seen as a vast generalisation of the classical theorem of Jordan \cite{Jordan72} that a finite group cannot be covered by conjugates of a proper subgroup.
	
	In 1994, Praeger gave two significant pieces of evidence for Conjecture~\ref{con:neumann-praeger}. The first is a proof of Conjecture~\ref{con:neumann-praeger} when $U$ is a maximal subgroup of $G$ \cite[Theorem~4.4]{Praeger94}, and the second is Theorem~\ref{thm:main} below, which, roughly speaking, establishes Conjecture~\ref{con:neumann-praeger} when $G$ has bounded $A$-chief length \cite[Theorem~4.6]{Praeger94}. To state this more precisely, we need some notation.
	
	Let $G$ be a finite group and let $A$ be a subgroup of the automorphism group $\Aut(G)$. Then the \emph{$A$-core} of $U$, written $U_A$, and the \emph{$A$-cocore} of $U$, written $U^A$, are defined as follows
	\[
	U_A = \bigcap_{a \in A} U^a \qquad \text{and} \qquad U^A = \bigcup_{a \in A} U^a.
	\]
	An \emph{$A$-chief series} for $G$ is an unrefinable $A$-invariant series $G = G_0 > \cdots > G_c = 1$. By the Jordan--H\"older theorem, every $A$-chief series for $G$ has the same length, called the \emph{$A$-chief length} of $G$, written $\ell_A(G)$.
	
	We are now in a position to state \cite[Theorem~4.6]{Praeger94}.
	
	\begin{thm} \label{thm:main}
		There exists a function $g\: \mathbb{N} \times \mathbb{N} \to \mathbb{N}$ such that the following holds. Let $G$ be a finite group, let $U \leq G$ and let $\Inn(G) \leq A \leq \Aut(G)$ with $|A:\Inn(G)| = n$. Write $c = \ell_A(G/U_A)$. If $G = \bigcup_{a \in A} U^a$, then $|G:U| \leq g(n,c)$.
	\end{thm}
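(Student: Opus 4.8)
The plan is to argue by induction on the $A$-chief length $c = \ell_A(G/U_A)$, reducing at each stage to a situation with strictly smaller chief length while keeping control of the index $|G:U|$ in terms of $n$ and $c$. Since $U^a = (U_A)^a$ plays no role in whether $G$ is covered once we pass to $G/U_A$... wait, more carefully: replacing $G$ by $G/U_A$ and $U$ by $U/U_A$ preserves the covering hypothesis, preserves $n$ (as $A$ acts faithfully enough --- one must check $A/C_A(G/U_A)$ still has bounded index over the inner automorphisms, which is where a little care is needed), and makes $U_A = 1$. So I would first reduce to the case $U_A = 1$, i.e.\ $U$ is \emph{core-free}, and $c = \ell_A(G)$.

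Next, take a minimal $A$-invariant normal subgroup $N$ of $G$. The subgroup $UN$ properly contains... not necessarily, but $UN \geq U$ and $(UN)_A \geq N > 1$, so by induction applied to $G$ with the subgroup $UN$ (noting $G = \bigcup_a U^a \subseteq \bigcup_a (UN)^a$, so $UN$ is also an $A$-covering subgroup, and $\ell_A(G/(UN)_A) < c$ since $(UN)_A \neq 1$), we obtain $|G : UN| \leq g(n, c-1)$. Thus it suffices to bound $|UN : U| = |N : N \cap U|$. Now the heart of the matter is to bound the index of $U \cap N$ in $N$, using that the $A$-conjugates of $U$ cover $G$ and hence, intersecting with $N$, that something like the $A$-conjugates of $U \cap N$ cover a large portion of $N$ --- or rather, that for each $g \in N$, $g \in U^a$ for some $a$, so $g \in U^a \cap N$. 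The cosets $gU$ for $g$ ranging over $N$ are covered by the sets $U^a \cap N$, but these need not be subgroups of $N$ unless $N$ is $A$-invariant and $U^a \cap N$ relates to $(U \cap N)^a$, which it does: $U^a \cap N = U^a \cap N^a = (U \cap N)^a$. So the $A$-conjugates of $U \cap N$ cover $N$, and now one is in the setting of covering the (possibly nonabelian, possibly non-simple, but characteristically simple) group $N$ by conjugates of a subgroup under the action of a group of automorphisms.

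The main obstacle, and where I expect the real work to lie, is exactly this base-case-type analysis: bounding $|N : U \cap N|$ when $N$ is a minimal $A$-invariant normal subgroup of $G$ covered by the $A$-conjugates of $U \cap N$, with only $|A : \Inn(G)| = n$ under control. Here $N \cong S^k$ for a simple group $S$, and $A$ permutes the $k$ simple factors; the subtlety is that $A$ acting on $N$ is not the full automorphism group of $N$, nor need $\Inn(G)$ induce all inner automorphisms of $N$, so Jordan's theorem does not directly apply and one genuinely needs the bounded-index hypothesis. I would split into the abelian case ($S$ cyclic of prime order, $N$ an $\mathbb{F}_p$-module for $A$), where covering a vector space by conjugates of a subspace under a linear group of bounded index over the "scalars plus $G$-action" should force the subspace to have bounded codimension via a counting or Clifford-theoretic argument, and the nonabelian case, where one exploits that the number of orbits of $A$ on the simple factors is bounded (by roughly $\log$ of something, or controlled via $n$) and applies a result on normal coverings of almost simple groups or direct products thereof. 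Assembling the two-variable bound $g(n,c)$ from $g(n,c-1)$ and this base estimate is then routine, but getting a clean, correct statement in the characteristically simple case --- precisely the point where the original argument erred --- is the crux.
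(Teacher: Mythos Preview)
Your outline agrees with the paper's through the first reduction: pass to $G/U_A$ to assume $U_A=1$, pick a minimal $A$-invariant subgroup $N\leqn G$, and use induction (on $c$, or equivalently on $|G|$) together with $(UN)_A\geq N>1$ to obtain $|G:UN|\leq g(n,c-1)$. The divergence is at the next step. You propose to finish by bounding $|N:U\cap N|$ directly from the covering $(U\cap N)^A=N$ of the characteristically simple group $N$. The paper does \emph{not} attempt this in general; it splits into two cases. If $G=UN$, it invokes Proposition~\ref{prop:supplement} (proved separately), which bounds $|G:U|=|N:U\cap N|$ purely in terms of $n$. If $UN<G$, it abandons $N$ and passes instead to $K=(UN)_A$, which still satisfies $\ell_A(K)\leq c-1$; crucially, the already-established bound $|G:UN|\leq g(n,c-1)$ gives $|A:\overline{UN}|\leq n\cdot g(n,c-1)$ and hence $|A|_K:\Inn(K)|\leq (n\cdot g(n,c-1))!$, so the inductive hypothesis applies to $(K,U\cap K,A|_K)$ and yields a bound on $|K:U\cap K|=|UN:U|$.

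The gap in your proposal is the nonabelian half of your ``base estimate''. In the abelian case your idea works: $U\cap N\leqn UN$ (because $N$ is abelian), so the number of $A$-conjugates of $U\cap N$ is at most $|A:\overline{UN}|\leq n\cdot g(n,c-1)$, and since these subgroups cover $N$ one gets $|N:U\cap N|\leq n\cdot g(n,c-1)$. But when $N=T^k$ with $T$ nonabelian simple, $U\cap N$ is typically \emph{not} normal in $UN$, and the only obvious normaliser bound is $|A:N_A(U\cap N)|\leq |A:\overline{U}|\leq n\cdot |G:U|$, which is circular. Your suggestion to ``apply a result on normal coverings of almost simple groups or direct products thereof'' would need $|A|_N:\Inn(N)|$ bounded in terms of $n$, but a short calculation gives only $|A|_N:\Inn(N)|\leq n\cdot |G:NC_G(N)|$, and $|G:NC_G(N)|$ is not controlled by $n$ or by $|G:UN|$. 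Likewise, the concrete diagonal-subgroup analysis that underlies Proposition~\ref{prop:supplement} uses $G=UN$ in an essential way: it needs $U$ to act transitively on the simple factors within each $G$-orbit, and without $G=UN$ the $U$-orbits merely refine the $G$-orbits, with no a~priori bound on their number that is independent of $|G:U|$. So the sketch, as it stands, does not close in the nonabelian case; the paper's trick of replacing $N$ by $K=(UN)_A$ is exactly what sidesteps this, at the cost of the factorial blow-up in the first variable of $g$. (Incidentally, the original error you allude to was in the \emph{abelian} case, which your approach actually handles; the nonabelian case is where your sketch needs a new idea.)
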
  

	Unfortunately, the proof of~\cite[Theorem~4.6]{Praeger94} is not correct. Indeed, there is an error on page~29 (on line~7 counting from the bottom of the page). The mistake is subtle. Using the notation from the proof of~\cite[Theorem~4.6]{Praeger94}, it occurs in the inductive step under the assumption that $\overline{G} = Z_p^k$. In the degenerate case when $\overline{G}=\overline{U}$, the inequality $p \leq |\overline{G}:\overline{U}|$ is false and no upper bound on $p$ is derived in this case. To the best of our knowledge, this error was first observed by the third author in the preparation of~\cite{FPS24}. 
		
	The purpose of this paper is to give a complete proof of Theorem~\ref{thm:main}. We were unable to fix the precise error in Praeger's argument, so we give an alternative proof.
	
	As a consequence of how we structure our proof, we actually also verify another case of Conjecture~\ref{con:neumann-praeger}, namely when $U$ supplements a minimal $A$-invariant subgroup of $G$, see Proposition~\ref{prop:supplement} below.

	\section{Preliminaries} \label{s:prelims}
	
	\subsection{Diagonal subgroups} \label{ss:p_diagonal}
	
	Let $T$ be a finite group and let $k$ be a positive integer. It will be useful to introduce notation for some particular subgroups of $T^k$. For $j \in \{1,\dots,k\}$, we write 
	\[
	T_j = \{ (x_1, \dots, x_k) \in T^k \mid \text{$x_i = 1$ if $i \neq j$} \},
	\]
	and more generally, for $\Delta \subseteq \{1,\dots,k\}$, we write 
	\begin{equation} \label{eq:diagonal}
		T_\Delta = \{ (x_1, \dots x_k) \in T^k \mid \text{$x_i = 1$ if $i \not\in \Delta$} \},
	\end{equation}
	so $T_\emptyset=1$, $T_{\{i\}} = T_i$ and $T_{\{1,\dots,k\}} = T^k$. A subgroup $H \leq T^k$ is called a \emph{diagonal subgroup} if there exist $\phi_1, \dots, \phi_k \in \Aut(T)$ such that $H = \{ (x^{\phi_1}, \dots, x^{\phi_k}) \mid x \in T \}$.
	
	\begin{lemma} \label{lem:char_simple}
		Let $T$ be a finite nonabelian simple group, let $k$ be a positive integer, let $A \leq \Aut(T^k)$ and let $U < T^k$ such that $U^A = T^k$. Then there is a partition $\{ \Delta_1, \dots, \Delta_t \}$ of $\{1, \dots, k\}$ such that $U = D_1 \times \cdots \times D_t$, where $D_i$ is a diagonal subgroup of $T_{\Delta_i}$ for all $1 \leq i \leq t$.
	\end{lemma}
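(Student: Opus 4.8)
The plan is to exploit the structure of $\Aut(T^k)$, to reduce the statement to showing that $U$ is a \emph{subdirect} subgroup of $T^k$ --- that is, $\pi_i(U) = T_i$ for every $i$, where $\pi_i \: T^k \to T_i$ denotes the $i$-th projection --- and then to invoke the classical description of subdirect subgroups of a power of a nonabelian simple group.

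Since $T$ is nonabelian simple, the subgroups $T_1, \dots, T_k$ are exactly the minimal normal subgroups of $T^k$, so they are permuted by $\Aut(T^k)$, and indeed $\Aut(T^k) = \Aut(T) \wr \Sym(k)$, acting on $T^k = T_1 \times \cdots \times T_k$ in the natural way. Restricting this action to $A$ gives a homomorphism $\sigma \: A \to \Sym(k)$, and unwinding the wreath product one sees that $\pi_i(U^a) = \beta(\pi_j(U))$ for some coordinate $j$ and some $\beta \in \Aut(T)$, both depending on $a$ and $i$. The crucial point is that $U^A = T^k$ forces $\pi_i(U) = T_i$ for every $i$. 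Suppose not, and put $M := \pi_i(U)$, a proper subgroup of $T_i \cong T$. Then $V := \pi_i^{-1}(M)$ is a proper subgroup of $T^k$ containing $U$, so $T^k = U^A \subseteq \bigcup_{a \in A} V^a$; the same computation gives $V^a = \pi_j^{-1}(\beta_a(M))$ for a coordinate $j = j(a,i)$ and some $\beta_a \in \Aut(T)$, so that $g \in V^a$ exactly when $\pi_{j(a,i)}(g) \in \beta_a(M)$. Evaluating at the constant tuple $g = (x, \dots, x)$, every coordinate of which equals $x$, we deduce that each $x \in T$ lies in $\beta_a(M)$ for some $a$, hence $\bigcup_{\beta \in \Aut(T)} \beta(M) = T$. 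Thus everything comes down to the following fact, which is precisely the case $k = 1$ of the lemma: no proper subgroup $M$ of a finite nonabelian simple group $T$ satisfies $\bigcup_{\beta \in \Aut(T)} \beta(M) = T$; equivalently, every proper subgroup of $T$ avoids some $\Aut(T)$-conjugacy class. I would establish this separately (one route is to observe that a proper subgroup of $T$ cannot contain an element of every order occurring in $T$), and I expect it to be the only non-formal ingredient of the proof.

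It remains to treat the case where $U$ is subdirect in $T^k$, and here the conclusion is the classical structure theorem for such subgroups. Arguing by induction on $k$: the subgroup $U \cap T_k$ is normal in $U$, and since $\pi_k(U \cap T_k)$ is a normal subgroup of the simple group $\pi_k(U) = T_k$ we get $U \cap T_k \in \{1, T_k\}$; in the case $U \cap T_k = 1$, $U$ projects isomorphically into $T_{\{1,\dots,k-1\}}$ and is the graph of a surjection onto $T_k$ whose kernel, being a normal subgroup of a direct power of $T$, is a subproduct. From this one deduces that the relation $\sim$ on $\{1, \dots, k\}$ with $i \sim j$ if and only if the image of $U$ under the projection $T^k \to T_i \times T_j$ is a proper --- equivalently, diagonal --- subgroup of $T_i \times T_j$ is an equivalence relation, with classes $\Delta_1, \dots, \Delta_t$ say, and that $U = D_1 \times \cdots \times D_t$ with each $D_\ell$ a diagonal subgroup of $T_{\Delta_\ell}$, as required. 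This last part is entirely standard; the heart of the matter is the $k = 1$ fact isolated above, the remainder being routine bookkeeping with the wreath product action and the classical subdirect-product analysis.
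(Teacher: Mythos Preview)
Your proposal is correct and follows essentially the same approach as the paper: both use the wreath-product structure $\Aut(T^k)=\Aut(T)\wr\Sym_k$ and the constant tuple $(x,\dots,x)$ to reduce to the statement that no proper subgroup $M<T$ satisfies $\bigcup_{\beta\in\Aut(T)}M^\beta=T$, and then invoke the standard subdirect/Goursat description of subgroups of $T^k$ with full projections. The only substantive difference is that the paper disposes of the $k=1$ fact by citing \cite{Saxl88} (Proposition~2), whereas you propose to prove it via the element-order argument; note that this route still requires the Classification of Finite Simple Groups, so it is not more elementary than Saxl's result, merely a repackaging of it.
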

	
	\begin{proof}
		Fix $1 \leq i \leq k$ and let $\pi_i\: G \to T$ be the projection $(x_1, \dots, x_k) \mapsto x_i$ onto the $i$th direct factor of $T^k$. We claim that $U\pi_i = T$. To see this, let $x \in T$ be arbitrary and consider $g = (x, \dots, x) \in T^k$. Since $U^A = T^k$, there exists $a \in A$ such that $g^a \in U$. Since $\Aut(T^k) = \Aut(T) \wr \Sym_k$, there exists $b \in \Aut(T)$ such that the $i$th component of $g^a$ is $x^b$. In particular, $x$ is $\Aut(T)$-conjugate to an element of $U\pi_i$. Therefore, 
		\[
		T = \bigcup_{a\in \Aut(T)}(U\pi_i)^a= (U\pi_i)^{\Aut(T)}.
		\] 
		However, by \cite[Proposition~2]{Saxl88}, this forces $U\pi_i = T$, as claimed. The result now follows by Goursat's lemma.
	\end{proof}

	\subsection{A bound on the order of a finite simple group} \label{ss:p_bound}
	
	For the rest of the paper, fix a nondecreasing function $h\: \mathbb{N} \to \mathbb{N}$ such that for all finite nonabelian simple groups $T$, if the number of $\Aut(T)$-classes of elements of $T$ is $m$, then $|T| \leq h(m)$. By \cite[Lemma~4.4]{Pyber92}, such a function exists, and, as explained in the comments following \cite[Theorem~4.5]{Praeger94}, one can take $h(m) = 2^{c \, (\log{m})^2 \log\log m}$ for an appropriate constant $c$.

	\section{Proof of the main result} \label{s:proof}
	
	Before proving Theorem~\ref{thm:main}, we begin with a special case that arises in the proof, namely, in the notation of Theorem~\ref{thm:main}, when the subgroup $U$ supplements a minimal $A$-invariant subgroup of $G$. In this case, we can actually prove the more general Conjecture~\ref{con:neumann-praeger}, so this may be of independent interest.
	
	\begin{proposition}\label{prop:supplement}
		There exists a function $f\: \mathbb{N}\to \mathbb{N}$ such that the following holds. Let $G$ be a finite group, let $U \leq G$ and let $\Inn(G) \leq A \leq \Aut(G)$ with $|A:\Inn(G)| = n$. Assume that there exists a minimal $A$-invariant subgroup $L$ of $G$ such that $G = UL$. If $G = \bigcup_{a \in A} U^a$, then $|G:U| \leq f(n)$.
	\end{proposition}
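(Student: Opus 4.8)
The plan is to reduce everything to a statement about the minimal $A$-invariant subgroup $L$ itself. Since $G = UL$ we have $|G:U| = |L : U \cap L|$, so, writing $V = U \cap L$, it suffices to bound $|L:V|$. First note that $V$ is an $A$-covering subgroup of $L$: as $L^a = L$ for all $a \in A$ we have $U^a \cap L = (U\cap L)^a = V^a$, and intersecting $G = \bigcup_{a} U^a$ with $L$ gives $L = \bigcup_{a} V^a$. We may assume $V \neq L$, else $L \leq U$ and $G = U$. A minimal $A$-invariant subgroup is characteristically simple, so either $L$ is elementary abelian of exponent $p$, or $L = T^k$ with $T$ nonabelian simple; we treat the two cases separately, and since $\Aut(T^k) = \Aut(T)\wr\Sym_k$ we will be able to feed the second case into Lemma~\ref{lem:char_simple}.

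\emph{The abelian case} is short and gives $|G:U| \leq n$. Since $V \leq U$ we have $U \leq N_G(V)$, and since $V \leq L$ with $L$ abelian we have $L \leq C_G(V) \leq N_G(V)$; as $G = UL$ this forces $V \trianglelefteq G$. Hence $\Inn(G)$ fixes $V$, so the orbit of $V$ under the action of $A$ on the subgroups of $G$ has size at most $|A:\Inn(G)| = n$. Thus $L = \bigcup_a V^a$ is a union of at most $n$ subgroups, each of order $|V|$, whence $|L| \leq n|V|$ and $|G:U| = |L:V| \leq n$.

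\emph{The nonabelian case} is the heart of the matter. Let $\bar A \leq \Aut(L) = \Aut(T)\wr\Sym_k$ be the image of $A$ acting on $L$ by conjugation; the image of $\Inn(G)$ in $\bar A$ is $G/C_G(L)$, of index at most $n$, and it contains $\Inn(L)\cong L$. Applying Lemma~\ref{lem:char_simple} to $V<L$ with $V^{\bar A}=L$ yields a partition $\{\Delta_1,\dots,\Delta_t\}$ of $\{1,\dots,k\}$ with $V = D_1\times\cdots\times D_t$, where $D_i$ is a diagonal subgroup of $T_{\Delta_i}$; hence $|G:U| = |L:V| = |T|^{\,d}$ where $d = k-t$, and we may assume $d\geq 1$. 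Choosing coset representatives $a_1,\dots,a_n$ of $\Inn(G)$ in $A$ and using $\Inn(G)\trianglelefteq A$, one gets $L = \bigcup_{s=1}^{n}\bigcup_{g\in G}(V^{a_s})^g$; each $V^{a_s}$ is again a product of diagonal subgroups, with the same multiset of block sizes and so the same defect $d$, and $N_L(V^{a_s}) = V^{a_s}$, so the $G$-conjugates of $V^{a_s}$ coincide with its $L$-conjugates. Now for any product of diagonal subgroups $W = D_1\times\cdots\times D_t$ of $T^k$ a direct computation (via the description of $\bigcup_{m}D^m$ for a diagonal subgroup $D$ of $T^{|\Delta_i|}$) gives
\[
\Big|\bigcup_{l\in L} W^l\Big| \;=\; |L|\cdot\prod_{i=1}^{t}\Big(\sum_{C}(|C|/|T|)^{|\Delta_i|}\Big)\;\leq\; |L|\cdot\mu^{-d},
\]
where $C$ runs over the conjugacy classes of $T$, $\mu = \min_{x\in T}|C_T(x)|$, and the inequality uses $\sum_C(|C|/|T|)^m \leq \mu^{-(m-1)}$. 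Summing over $s$ gives $|L| \leq n\,\mu^{-d}|L|$, so $\mu^{\,d}\leq n$; in particular $d\leq\log_2 n$ and, since $d\geq 1$, also $\mu\leq n$.

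It remains to bound $|T|$, and this is the main obstacle: the covering condition alone does not bound $|T|$, so finite simple group theory is unavoidable here. The point is that $\mu(T)\leq n$ — the existence of an element of $T$ with centralizer of order at most $n$ — forces the number of $\Aut(T)$-classes of elements of $T$ to be bounded in terms of $n$ (a consequence of the classification of finite simple groups), so by the function $h$ of Section~\ref{ss:p_bound} we get $|T|\leq\psi(n)$ for a suitable $\psi$. Then $|G:U| = |T|^{\,d} \leq \psi(n)^{\log_2 n}$, and taking $f(n)$ to be the maximum of $n$ and this last quantity completes the proof. The delicate steps are thus the two ingredients of the nonabelian case: the explicit count of $\bigl|\bigcup_{l\in L}W^l\bigr|$ (which yields $\mu^{\,d}\leq n$, hence simultaneously a bound on $d$ and on $\mu$), and the passage from $\mu(T)\leq n$ to a bound on $|T|$ through $h$.
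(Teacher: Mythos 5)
Your overall architecture matches the paper's up to the nonabelian case: the reduction to $V=U\cap L$ with $V^A=L$, the abelian case via $V\leqn G$, and the use of Lemma~\ref{lem:char_simple} to write $V$ as a product of diagonal subgroups are all exactly as in the paper. Where you genuinely diverge is in how you extract numerical information from the covering condition. The paper works with the $G$-orbits (blocks) on the simple factors and constructs explicit elements whose images in $V$ force $m\leq r\leq n$ (with $m$ the number of $\Aut(T)$-classes) and $s\leq r$; this feeds the number of classes \emph{directly} into the function $h$ and bounds $k=rs\leq n^2$. You instead run a counting argument: your formula $|\bigcup_{l\in L}W^l|=|L|\prod_i\sum_C(|C|/|T|)^{|\Delta_i|}$ is correct (the union over classes is disjoint and automorphisms preserve class sizes), the estimate $\sum_C(|C|/|T|)^m\leq\mu^{-(m-1)}$ is correct, and the reduction of $G$-conjugacy to $L$-conjugacy is correct --- though your stated reason ($N_L(V^{a_s})=V^{a_s})$) is not the relevant one: what you actually need is $G=U^{a_s}L$ together with $U^{a_s}\leq N_G(V^{a_s})$. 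This yields $\mu^d\leq n$, hence $d\leq\log_2 n$ and $\mu\leq n$, and is arguably cleaner than the paper's block analysis (you never need the orbit structure at all).

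The gap is in the final step. The function $h$ of Section~\ref{ss:p_bound} bounds $|T|$ in terms of the \emph{number of $\Aut(T)$-classes}, whereas your argument produces a bound on the \emph{minimal centralizer order} $\mu(T)$. These are different invariants, and the passage from ``$\mu(T)\leq n$'' to ``the number of $\Aut(T)$-classes is bounded in terms of $n$'' (equivalently, to ``$|T|$ is bounded in terms of $n$'') is not supplied by $h$, is not elementary (no counting identity converts a lower bound on the largest class into an upper bound on the number of classes), and is asserted by you with only the phrase ``a consequence of the classification''. It is a true statement, but proving it requires its own CFSG case analysis (alternating groups give $\mu=\Theta(m)$; groups of Lie type require separate treatment of bounded rank, large rank, and $q=2$) or an appeal to a genuinely nontrivial external theorem such as Hartley's general Brauer--Fowler theorem. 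As written, the decisive quantitative input of your proof is unproved and uncited, which is precisely the step the paper's combinatorial argument is engineered to avoid: by exhibiting $r\leq n$ pairwise non-$\Aut(T)$-conjugate elements and deriving a contradiction, the paper bounds the number of $\Aut(T)$-classes by $n$ directly and then invokes $h$ with no further input. To complete your proof you must either supply a proof (or precise reference) for the implication $\mu(T)\leq n\Rightarrow|T|\leq\psi(n)$, or replace the tail of your argument with a class-counting step of the paper's type.
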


	\begin{proof}
		Let $h\: \mathbb{N} \to \mathbb{N}$ be the function from Section~\ref{ss:p_bound}, and let $f:\mathbb{N}\to\mathbb{N}$ be defined as 
		\[
		f(n)=\max\{n, \, h(n)^{n^2}\}.
		\]
		
		Assume that $G = \bigcup_{a \in A} U^a$. If $L \leq U$, then $U=UL=G$ and $|G:U|=1 \leq f(n)$, so the result holds. Therefore, for the rest of the proof we may assume that $L\not\leq U$. In particular, $U\cap L < L$, so the $A$-core $(U\cap L)_A$ is trivial since $L$ is a minimal $A$-invariant subgroup of $G$.
		
		First assume $L$ is abelian. Certainly $U \cap L \leqn U$ and, since $L$ is abelian, we also have $U \cap L \leqn L$, so $U \cap L \leqn UL = G$. Therefore, the set $\{ (U \cap L)^a \mid a \in A \}$ has size at most $|A:\Inn(G)| = n$. Since $L = (U \cap L)^A$, we deduce that $|L| \leq n |U \cap L|$, so 
		\[
		|G:U| = |UL:U| = |L:U \cap L| \leq n \leq f(n).
		\]
		
		Next assume $L$ is nonabelian. Then $L = T^k$ for a positive integer $k$ and a nonabelian simple group $T$. Fix a partition $\mathcal{S} = \{\Sigma_1, \dots, \Sigma_r\}$ of $\{1, \dots, k\}$ such that 
		\[
		\{T_i \mid i \in \Sigma_1\}, \dots, \{ T_i \mid i \in \Sigma_r\}
		\]
		are the $G$-orbits on the set 
		\[
		\Omega=\{T_1, \dots, T_k\}
		\] 
		of simple factors of $L$. Since $G = UL$ and $L$ acts trivially on $\Omega$, the $G$-orbits are also $U$-orbits. Since $L$ has no proper nontrivial $A$-invariant subgroups, $A$ acts transitively on the set $\Omega$, and since $\Inn(G) \leqn A$, the $G$-orbits form a system of imprimitivity for $A$. This means that $r$ divides $k$, and writing $k=rs$, we have $|\Sigma_i| = s$ for all $1 \leq i \leq r$. Moreover, $r$ divides $n = |A:\Inn(G)|$, so, in particular, 
		\begin{equation} \label{eq:bound_rn}
			r \leq n.
		\end{equation} 
		For ease of notation, we will assume that $\Sigma_i = \{(i-1)s+1, \dots, is \}$ for all $1 \leq i \leq r$. Moreover, we will write elements of $L = T^k$ in the form $(x_{11}, \dots, x_{1s} \mid \ldots \mid x_{r1}, \dots, x_{rs})$, where $x_{ij} \in T_{(i-1)s+j}$. We will refer to $T_{\Sigma_1}$, \dots, $T_{\Sigma_r}$ as the \emph{blocks} of $L$. (Recall from~\eqref{eq:diagonal} that $T_{\Sigma_i}=\prod_{j \in \Sigma_i}T_j$.)
		
		Since $(U \cap L)^A = L$, by Lemma~\ref{lem:char_simple}, we may fix a partition $\mathcal{D} = \{ \Delta_1, \dots, \Delta_t \}$ of $\{1, \dots, k\}$ such that 
		\[
		U \cap L = D_1 \times \cdots \times D_t,
		\] 
		where $D_i$ is a diagonal subgroup of $T_{\Delta_i}$ for all $1 \leq i \leq t$. Since $U\cap L<L$, for at least one $1 \leq i \leq t$ we have $|\Delta_i| \geq 2$. By relabelling the parts of $\mathcal{D}$, without loss of generality, we may assume that $|\Delta_1| \geq 2$.
		
		Let $m$ be the number of $\Aut(T)$-classes in $T$. Suppose that $m > r$. Then we may fix nontrivial elements $x_1, \dots, x_r \in T$ that represent distinct $\Aut(T)$-classes. Consider 
		\[
		g = (x_1,1,\dots,1 \mid x_2,1,\dots,1 \mid \ldots \mid x_r, 1, \dots, 1) \in T_{\Sigma_1}\times T_{\Sigma_2}\times\cdots\times T_{\Sigma_r}= L.
		\] 
		Since $(U \cap L)^A = L$, there exists $a \in A$ such that $g^a \in U \cap L$. Fix $l \in \Delta_1$. Since $U$ acts transitively on the set of factors of each block of $L$ (that is, $U$ acts transitively by conjugation on the simple direct factors of $T_{\Sigma_j}$ for each $j$), there exists $u \in U$ such that the $l$th entry of $g^{au}$ is nontrivial. Now $g^{au} \in U \cap L$, since $U \cap L \leqn U$, so $D_1$ contains an element with a nontrivial $l$th entry. However, all nontrivial entries of $g$, and hence $g^{au}$, are pairwise not $\Aut(T)$-conjugate, so $|\Delta_1| = 1$, which is a contradiction. This contradiction implies
		\begin{equation} \label{eq:bound_mr}
			m \leq r.
		\end{equation}
		
		Suppose that $s > r$.  Let $x \in T$ be nontrivial and consider
		\begin{align*}
			g &= (x,1,\dots,1 \mid \ldots \mid x, 1, \dots, 1) \in L, \\
			h &= (1,\dots,1 \mid x,1,\dots,1 \mid x,x,1,\dots,1 \mid \ldots \mid x, \dots x, 1, \dots, 1) \in L.
		\end{align*}
		Since $(U \cap L)^A = L$, there exist $a,b \in A$ such that $g^a, h^b \in U \cap L$. To prove our claim, we will need to establish two secondary claims.
		
		First, let $1 \leq i \leq t$. We claim that $|\Delta_i \cap \Sigma_j| \leq 1$ for all $1 \leq j \leq r$. Fix $l \in \Delta_i$. As before, there exists $u \in U$ such that the $l$th entry of $g^{au}$ is nontrivial and $g^{au} \in U \cap L$, so $D_i$ contains an element with a nontrivial $l$th entry. However, in each block, exactly one entry of $g$, and hence $g^{au}$, is nontrivial, so $D_i$ is supported on at most one factor of each block, or said otherwise $|\Delta_i \cap \Sigma_j| \leq 1$ for all $1 \leq j \leq r$, as required.
		
		In particular, the above claim implies that $1, \dots, s$ are contained in distinct parts of the partition $\{\Delta_1, \dots, \Delta_t\}$. By relabelling the coordinates (in a manner that preserves the block system) if necessary, without loss of generality, we may assume that $1 \in \Delta_1$, and by relabelling $\Delta_2, \dots, \Delta_t$ if necessary, without loss of generality, we may assume that $i \in \Delta_i$ for all $1 \leq i \leq s$.
		
		Second, let $S = \{ j \mid |\Delta_1 \cap \Sigma_j| = 1 \}$ and let $1 \leq i \leq s$. We claim that $S = \{ j \mid |\Delta_i \cap \Sigma_j| = 1 \}$. Since $U$ acts transitively on the set of factors of each block of $L$, there exists $u \in U$ such that $T_1^u = T_i$ and hence $D_1^u = D_i$ since $1 \in \Delta_1$ and $i \in \Delta_i$. However, $U$ acts trivially on the set of blocks of $G$, so $D_1$ and $D_i$ are supported on the same blocks, or said otherwise $S = \{ j \mid |\Delta_i \cap \Sigma_j| = 1 \}$, as required.
		
		Since $|\Delta_1 \cap \Sigma_j| \leq 1$ for all $1 \leq j \leq r$, we must have $|\Delta_1| = |S|$, so, in particular, $|S| \geq 2$. By construction, $1 \in \Delta_1$ and $1 \in \Sigma_1$, so $1 \in S$, and now fix $l > 1$ such that $l \in S$. Then $|\Delta_i \cap \Sigma_1| = |\Delta_i \cap \Sigma_l| = 1$ for all $1 \leq i \leq s$. In particular, every element of $U \cap L$ has the same number of nontrivial entries in block $T_{\Sigma_1}$ and $T_{\Sigma_l}$. However, by the shape of the element $h$, this contradicts $h^b \in U \cap L$. This contradiction has arisen from the two claims that we proved under the supposition that $s>r$, so we conclude that
		\begin{equation} \label{eq:bound_sr}
			s \leq r.
		\end{equation}
		
		Using the bounds in~\eqref{eq:bound_rn},~\eqref{eq:bound_mr} and~\eqref{eq:bound_sr}, we obtain
		\[
			|G:U| = |UL:U| = |L:U \cap L| \leq |L| = |T|^k \leq h(m)^k = h(m)^{rs} \leq h(n)^{n^2} \leq f(n),
		\]
		which completes the proof.
	\end{proof}
	
	It will be convenient to introduce some further notation for automorphism groups. Let $G$ be a finite group, let $A \leq \Aut(G)$ and let $N$ be an $A$-invariant subgroup of $G$. Write
	\[
	A|_N = \{ a|_N \mid a \in A \} \leq \Aut(N),
	\] 
	where $a|_N$ denotes the restriction of the automorphism $a$ to $N$, and similarly, write
	\[
	A|_{G/N} = \{ a|_{G/N} \mid a \in A \} \leq \Aut(G/N),
	\]
	where $a|_{G/N}$ denotes the automorphism of $G/N$ induced by $a$. Since every $A$-invariant series of $G$ can be refined to an $A$-chief series of $G$, we see that $\ell_{A|_N}(N) \leq \ell_A(G)$ with equality if and only if $N=G$, and $\ell_{A|_{G/N}}(G/N) \leq \ell_A(G)$ with equality if and only if $N = 1$. For ease of notation, we write $\ell_A(N)$ for $\ell_{A|_N}(N)$ and $\ell_{A}(G/N)$ for $\ell_{A|_{G/N}}(G/N)$.
	
	We are now in a position to prove the main result of this paper.

	\begin{proof}[Proof of Theorem~\ref{thm:main}]
	    Let $h \: \mathbb{N} \times \mathbb{N}$ be the function from Section~\ref{ss:p_bound}, let $f \: \mathbb{N} \times \mathbb{N}$ be a function that satisfies Proposition~\ref{prop:supplement}, and let $g\: \mathbb{N} \times \mathbb{N} \to \mathbb{N}$ be a function such that
		\begin{enumerate}[(i)]
			\item\label{eq1} $g(n,0) \geq 1$,
			\item\label{eq2} $g(n,c) \geq \max\{ g(n,c-1) \cdot g((n \cdot g(n,c-1))!,c-1), \, f(n) \}$ if $c \geq 1$,
			\item\label{eq3} $g$ is nondecreasing in both variables.
		\end{enumerate}
		
		Assume that $G = \bigcup_{a \in A} U^a$. For ease of notation, write $c = \ell_A(G/U_A)$. We proceed by induction on $|G|$. In the base case where $|G| = 1$, we have $|G:U| = 1 \leq g(1,0) = g(n,c)$. For the inductive step, assume that $|G| > 1$. First assume that $U_A \neq 1$. Since $G/U_A = (U/U_A)^A$ and $|G/U_A| < |G|$, by induction,
		\[
		|G:U| = |G/U_A : U/U_A| \leq g(n,c),
		\]
		noting that $|A|_{G/U_A}:\Inn(G/U_A)| \leq |A:\Inn(G)| = n$ and $\ell_A((G/U_A)/(U/U_A)_A) = \ell_A(G/U_A) = c$. Therefore, for the remainder of the proof, we may assume that $U_A = 1$. 
		
		Let $L$ be a minimal $A$-invariant subgroup of $G$ (we allow the possibility that $G = L$). If $G=UL$, then Proposition~\ref{prop:supplement} ensures that $|G:U| \leq f(n) \leq g(n,c)$. Therefore, for the remainder of the proof, we may assume that $UL < G$. 

		Since $G/L = (UL/L)^A$ and $|G/L| < |G|$, by induction, 
		\begin{equation} \label{eq:g_ul}
			|G:UL| = |G/L:UL/L| \leq g(n,c-1),
		\end{equation}
		noting that $|A_{G/L}:\Inn(G/L)| \leq |A:\Inn(G)| \leq n$ and $\ell_A(G/L) = c-1$.
		
		Consider the $A$-core $K = (UL)_A$. Certainly $K \leq UL$, so $UK \leq UL$, and as $L$ is an $A$-invariant subgroup of $UL$, we have $L \leq K$ and hence $UL \leq UK$. Therefore, $UK = UL < G$, so $K < G$ and hence $\ell_A(K) < \ell_A(G) = c$, or said otherwise,
		\begin{equation} \label{eq:bound_c}
			\ell_A(K) \leq c-1.
		\end{equation}

		For $H \leq G$, write $\overline{H} = HZ(G)/Z(G)$ considered as a subgroup of $\Inn(G) = G/Z(G)$. Then, using~\eqref{eq:g_ul},
		\[
		|A:\overline{UL}| = |A:\Inn(G)| \cdot |\Inn(G):\overline{UL}| \leq |A:\Inn(G)| \cdot |G:UL| \leq n \cdot g(n,c-1).
		\]
		Therefore, we have
		\begin{equation} \label{eq:bound_n}
			|{A|_K}:\Inn(K)| \leq |A:\overline{K}| = |A:\overline{(UL)_A}| = |A:\overline{UL}_A| \leq |A:\overline{UL}|! \leq (n \cdot g(n,c-1))!.
		\end{equation}
		
		Since $K = (U \cap K)^A$ and $|K| < |G|$, by induction, 
		\begin{equation} \label{eq:k_uk}
		|K:U \cap K| \leq g(|{A|_K}:\Inn(K)|, \ell_A(K)) \leq g((n \cdot g(n,c-1))!, c-1),
		\end{equation}
		where the final inequality follows from~\eqref{eq:bound_c} and~\eqref{eq:bound_n}.
		
		Using the bounds in~\eqref{eq:g_ul} and~\eqref{eq:k_uk}, we obtain
		\begin{align*}
			|G:U| &= |G:UK| \cdot |UK:U| = |G:UL| \cdot |K:U \cap K| \\ &\leq g(n,c-1) \cdot g((n \cdot g(n,c-1))!, c-1) \leq g(n,c). \qedhere
		\end{align*}
	\end{proof}

	\subsection*{Acknowledgements} The first and third authors are funded by the European Union via the Next Generation EU (Mission 4 Component 1 CUP B53D23009410006, PRIN 2022, 2022PSTWLB, \emph{Group Theory and Applications}). The second author is an EPSRC Postdoctoral Fellow (EP/X011879/1), and he wishes to thank the University of Milano-Bicocca for their hospitality. In order to meet institutional and research funder open access requirements, any accepted manuscript arising shall be open access under a Creative Commons Attribution (CC BY) reuse licence with zero embargo.


\begin{thebibliography}{999}
		
		\bibitem{BSW24} 
		D.~Bubboloni, P.~Spiga and T.~Weigel, \emph{Normal $2$-coverings of the finite simple groups and their generalizations}, Lecture Notes in Mathematics, vol. 2352, Springer, 2024.
		
		\bibitem{FPS24} 
		M.~Fusari, A.~Previtali and P.~Spiga, \emph{Cliques in derangement graphs for innately transitive groups}, J. Group Theory \textbf{27} (2024), 929--965.
		
		\bibitem{Jehne77} 
		W.~Jehne, \emph{Kronecker classes of algebraic number fields}, J. Number Theory \textbf{9} (1977), 279--320.
		
		\bibitem{Jordan72}
		C.~Jordan, \emph{Recherches sur les substitutions}, J. Math. Pures Appl. \textbf{17} (1872), 351--367.
		
		\bibitem{Klingen78}
		N.~Klingen, \emph{Zahlk\"{o}rper mit gleicher Primzerlegung}, J. Reine Angew. Math. \textbf{299} (1978), 342--384.
		
		\bibitem{Klingen98} 
		N.~Klingen, \emph{Arithmetical Similarities}, Oxford Math. Monogr., Clarendon Press, Oxford University Press, 1998.
		
		\bibitem{Kourovka} E.~I.~Khukhro, V.~D.~Mazurov, \emph{The Kourovka notebook: unsolved problems in group theory}, 20th edn, (Russian Academy of Sciences Siberian Division, Institute of Mathematics, Novosibirsk, 2022).
		
		\bibitem{Praeger94} 
		C.~E. Praeger, \emph{Kronecker classes of fields and covering subgroups of finite groups}, J. Austral. Math. Soc. \textbf{57} (1994), 17--34.
		
		\bibitem{Praeger88}
		C.~E. Praeger, \emph{Covering subgroups of groups and Kronecker classes of fields}, J. Algebra \textbf{118} (1988), 455--463.
		
		\bibitem{Pyber92} 
		L.~Pyber, \emph{Finite groups have many conjugacy classes}, J. Lond. Math. Soc. \textbf{46} (1992), 239--249.
		
		\bibitem{Saxl88} 
		J.~Saxl, \emph{On a question of W. Jehne concerning covering subgroups of groups and Kronecker classes of fields}, J. Lond. Math. Soc. \textbf{38} (1988), 243--249.
		
	\end{thebibliography}
\end{document}